\let\BFseries\bfseries\def\bfseries{\BFseries\mathversion{bold}} 
\newcommand{\ind}{1\hspace{-0.098cm}\mathrm{l}}
\newcommand{\eps}{\varepsilon}
\theoremstyle{plain}
\newtheorem{thm}{Theorem}
\newtheorem{lem}[thm]{Lemma}
\theoremstyle{definition}
\newtheorem{rem}[thm]{Remark}
\newtheorem{exa}[thm]{Example}
\renewenvironment{proof}[1][] {\noindent {\bf Proof#1:} }{\hspace*{\fill}$\square$\medskip\par}
\def\P{{\bf {\mathbb{P}}}}
\newcommand{\pr}[1]{\P\left(#1\right)}
\def\E{\mathbb{E}} 
\newcommand{\indi}[1]{\,\ind_{\{#1\}}}
\def\R{\mathbb{R}}\def\N{\mathbb{N}} \def\d{\mathrm{d}}
\newcommand{\deq}{\stackrel{d}{=}}
\def\wast{{X}}
\renewcommand{\harvardand}{and}
\begin{document}
\title{A short note on small deviations of sequences of i.i.d.\ random variables with exponentially decreasing weights}
\author{Frank Aurzada\footnote{Address: Technische Universit\"{a}t Berlin, Institut f\"{u}r Mathematik, Sekr.\ MA 7-5, Str.\ des 17.\ Juni 136, 10623 Berlin, Germany. Email: {\tt aurzada@math.tu-berlin.de}}}
\date{\today}
\maketitle

\begin{abstract} We obtain some new results concerning the small deviation problem for $S=\sum_n q^n X_n$ and $M=\sup_n q^n X_n$, where $0<q<1$ and $(X_n)$ are i.i.d.\ non-negative random variables. In particular, the asymptotics is shown to be the same for $S$ and $M$ in some cases.\end{abstract}

\noindent {\bf Keywords:} Small deviation; lower tail probability; Laplace transform; sums of independent random variables.

\medskip
\noindent {\bf 2000 Mathematics Subject Classification:} 60G50, 60F99

\bigskip
\begin{center} To appear in: {\it Statistics and Probability Letters} \end{center}

\bigskip
\section{Introduction}
Let us consider the following series
\begin{equation}S=\sum_{n=0}^\infty \sigma_n \wast_n,\label{eqn:s}\end{equation}
where $(\sigma_n)$ is a sequence of positive numbers with $\sigma_n\to 0$ and $\wast, \wast_0, \wast_1, \ldots$ are a.s.\ non-negative, i.i.d.\ random variables. By Kolmogorov's Three Series Theorem, $S<\infty$ a.s.\ if and only if \begin{equation}\sum_{n=0}^\infty \E \min(1,\sigma_n \wast) < \infty. \label{eqn:3series} \end{equation} 
Under this condition, we study the small deviation problem, also called small ball problem, or lower tail probability problem for $S$: \begin{equation}\pr{ S \leq \eps},\qquad \text{as $\eps\to 0$.} \label{eqn:p}\end{equation}

There has been a lot of interest in small deviation problems in recent years (cf.\ the surveys by \citeasnoun{lif} and \citeasnoun{lishao}). The question of small deviations of sums was first addressed by \citeasnoun{sytaya}, \citeasnoun{gausslpearly}, and \citeasnoun{ligausslp} for Gaussian random variables. Later, this was generalized by \citeasnoun{lifap} (see also references therein), \citeasnoun{dll}, and \citeasnoun{roz} for random variables $\wast$ with finite variance. Then \citeasnoun{phd}, \citeasnoun{aurzada1}, and, most recently, \citeasnoun{br1} and \citeasnoun{br2}, give general treatments only under the necessary and sufficient condition (\ref{eqn:3series}).

The case of {\em polynomial} decrease of $(\sigma_n)$ seems to be almost completely understood by now thanks to \citeasnoun{br1}. The latter paper shows that there are different regimes in the sense that if $(\sigma_n)$ decreases slowly enough (in relation to the lower tail of $\wast$) then that order of decrease determines the lower tail of $S$. If, on the other hand, $(\sigma_n)$ decreases sufficiently fast then the lower tail of $\wast$ determines the lower tail of $S$.

In this note, we study the case of {\em exponential} decrease of $(\sigma_n)$, which is considered by \citeasnoun{dll} under the assumption $\E \wast^2<\infty$ and $\wast$ absolutely continuous, and with similar conditions to ours by \citeasnoun{br2}. Here we continue the theme from \citeasnoun{aurzada1}, \citeasnoun{br1}, and \citeasnoun{br2} and investigate (\ref{eqn:p}) under minimal assumptions. This must necessarily yield less precise assertions. In particular, we confine ourselves to the logarithmic order of the small deviations. Contrary to this, the early works for this problem mentioned above focussed on solving (\ref{eqn:p}) for particular distributions or under restrictive assumptions, such as finite variance, which could then yield very precise results such as the strong asymptotic order of (\ref{eqn:p}).

We are going to use the following notation for strong and weak asymptotics: We write $f\lesssim g$, if $\limsup f/g \leq 1$. Analogously, $f\gtrsim g$ is defined. Furthermore, $f\sim g$ means $\lim f/g = 1$. We also use $f\approx g$ if $0<\liminf f/g \leq \limsup f/g < \infty$.

As already mentioned, we consider the special case that $\sigma_n \sim q^n$ with some $0<q<1$. In order to get the logarithmic order of (\ref{eqn:p}) it is, in most cases, sufficient to treat $\sigma_n = q^n$. We come back to this question in Remark~\ref{rem:simplif}. I.e.\ we consider
\begin{equation}S=\sum_{n=0}^\infty q^n \wast_n.\label{eqn:s1}\end{equation}
In view of (\ref{eqn:3series}), we need to assume that the necessary and sufficient condition for the problem to be well-posed, \begin{equation}
\E \log\max(\wast,1) < \infty, \label{eqn:starns}
\end{equation}
holds, which we do in the following. This is the only condition imposed on the upper tail of $\wast$ in this paper. Note that this condition does not depend on $q$, i.e.\ on the sequence $(\sigma_n)$, unlike in the polynomial case, cf.\ \citeasnoun{phd}, \citeasnoun{aurzada1}, or \citeasnoun{br1}.

The main argument in this note (presented in Section~\ref{sec:main}) allows to calculate the logarithmic small deviation order and constant for $S$ from (\ref{eqn:s1}) for basically all commonly considered distributions. Furthermore, this can be extended to determine the small deviation order of the supremum $M$ given by $M=\sup_n q^n \wast_n$, which surprisingly leads to the same rate in many cases.

Apart from presenting the new results related to the small deviations of $S$ and $M$ with exponentially decreasing weights, this note has the purpose of giving a simple illustration of the general treatment of the small deviations of $S$ from (\ref{eqn:s}). Namely, to the knowledge of the author this case is the easiest example that shows how the problem is tackled, in particular, how fruitful the use of Tauberian theorems is in this context.

The paper is organized as follows. In Section~\ref{sec:main}, we present the strikingly simple main argument that helps us to solve the small deviation problem for $S$ from (\ref{eqn:s1}). In Section~\ref{sec:app}, several examples are given under various conditions for the lower tail of $\wast$. We continue in Section~\ref{sec:sup} with a comparison to the small deviations of the supremum $M$. Section~\ref{sec:final} provides the necessary Tauberian-type arguments and a final comment.

\section{Main argument} \label{sec:main}
The main argument is very simple. Recall that considering small deviations is equivalent to considering Laplace transforms at infinity, by the use of Tauberianian-type theorems, cf.\ \citeasnoun{bgt} and Lemma~\ref{lem:app} below. Let us define $$F(\lambda):=-\log \E e^{-\lambda S}\qquad\text{and}\qquad G(\lambda):=-\log \E e^{-\lambda \wast}.$$ The goal is to deduce the behaviour of $F$ from the behaviour of $G$.

Note that in this particular case, (\ref{eqn:s1}) yields \begin{equation}
q S + \wast \deq S. \label{eqn:trick}
\end{equation} This implies, by the independence of the $\wast_n$'s, $F(q \lambda) + G(\lambda) = F(\lambda)$. We iterate this and obtain that, for all $N\geq 0$, $$ F(\lambda) = G(\lambda) +G(q \lambda) + \ldots + G(q^{N} \lambda)+ F(q^{N+1} \lambda).$$
Note that, when $\lambda>1$, one can choose $N=N(\lambda)$ such that $q^{N+1} \lambda <  1 \leq q^{N}\lambda$. Then $0\leq F(q^{N+1} \lambda)\leq F(1)$. We summarise:

\begin{lem} \label{lem:thelem} For any $\lambda> 1$, \begin{equation} \sum_{0\leq n \leq \frac{\log \lambda}{\log 1/q}} G(q^n \lambda) \leq F(\lambda) \leq F(1)+ \sum_{0\leq n \leq \frac{\log \lambda}{\log 1/q}} G(q^n \lambda). \label{eqn:fg2}\end{equation} \end{lem}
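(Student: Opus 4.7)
The proof essentially unfolds the chain of reasoning already sketched in the paragraph before the lemma; I will fill in the missing justifications.

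The first step is to verify the distributional identity (\ref{eqn:trick}). Starting from (\ref{eqn:s1}), I would split off the first term to write $S = \wast_0 + q \sum_{n=1}^\infty q^{n-1} \wast_n$, and observe that by i.i.d.-ness of the $\wast_n$'s the tail sum has the same law as $S$ and is independent of $\wast_0$. Taking $-\log \E e^{-\lambda \cdot}$ on both sides of (\ref{eqn:trick}) and using independence yields the functional equation $F(\lambda) = F(q\lambda) + G(\lambda)$. A straightforward induction on $N$ then gives
\begin{equation*}
F(\lambda) = \sum_{n=0}^{N} G(q^n \lambda) + F(q^{N+1}\lambda) \qquad \text{for every } N\ge 0.
\end{equation*}

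Next I would choose $N=N(\lambda)$ so that the residual argument $q^{N+1}\lambda$ lies in $(0,1]$. Concretely, for $\lambda>1$ pick $N=\lfloor \log\lambda/\log(1/q)\rfloor$, which is the largest integer $n$ satisfying $n \le \log\lambda/\log(1/q)$; then $q^{N+1}\lambda < 1 \le q^N\lambda$. Note that the index range $0\le n \le \log\lambda/\log(1/q)$ in (\ref{eqn:fg2}) is exactly $\{0,1,\ldots,N\}$.

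The final step is to control the residual term $F(q^{N+1}\lambda)$. Since $\wast\ge 0$ a.s., we have $S\ge 0$ a.s., so $\mu \mapsto \E e^{-\mu S}$ is non-increasing on $[0,\infty)$, and hence $F$ is non-decreasing on $[0,\infty)$ with $F(0)=0$. Consequently, from $0 < q^{N+1}\lambda < 1$ we get $0 \le F(q^{N+1}\lambda) \le F(1)$. Substituting these bounds into the iterated identity yields (\ref{eqn:fg2}).

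The only non-routine step is the distributional identity (\ref{eqn:trick}); everything else (monotonicity of $F$, the choice of $N$, and induction) is essentially mechanical. There is no real obstacle, but care is needed to match the half-open range of $n$ in the statement with the floor function choice of $N$.
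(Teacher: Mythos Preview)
Your proposal is correct and follows exactly the approach the paper itself uses in the paragraph preceding the lemma: derive the functional equation $F(\lambda)=F(q\lambda)+G(\lambda)$ from the distributional identity (\ref{eqn:trick}), iterate it $N+1$ times, choose $N=\lfloor \log\lambda/\log(1/q)\rfloor$, and bound the remainder via $0\le F(q^{N+1}\lambda)\le F(1)$ using monotonicity of $F$. You have simply supplied the routine justifications (the explicit derivation of (\ref{eqn:trick}), the monotonicity of $F$ from $S\ge 0$, and the match between the floor and the index range) that the paper leaves implicit.
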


Note that $F(1)$ is independent of $\lambda$. Thus, one can determine the asymptotics of $F$ at infinity completely from the behaviour of $G(\lambda)$ for $\lambda\geq 1$. We are going to apply this lemma in several examples.

For concrete distributions, $G$ can be calculated explicitly; and Lemma~\ref{lem:thelem} turns the small deviation problem into an easy calculation exercise. If only the asymptotics of the lower tail of the distribution of $\wast$ is given, one can obtain the behaviour of $G$ at infinity, and thus the one of $F$ via (\ref{eqn:fg2}).

Recall from \citeasnoun{aurzada1} that in the case of polynomially decreasing sequences $(\sigma_n)$ in (\ref{eqn:s}), it is possible to treat the small deviation problem for $S$ analogously to the small deviation problem for $M=\sup_n \sigma_n \wast_n$. Note that this is not possible with the current approach, since the technique relies on (\ref{eqn:trick}). However, we come to the supremum case in Section~\ref{sec:sup}.

We finally point out that Lemma~\ref{lem:thelem} does not require any other condition apart from the necessary and sufficient condition for the problem to make sense, (\ref{eqn:starns}), which ensures the finiteness of all expressions.

\section{Applications} \label{sec:app}
\subsection{Random variables with positive mass at the origin}
We start with the case when $\wast$ has a positive mass at the origin. This corresponds to $G$ tending to a constant.

\begin{thm} Let $\pr{ \wast =0} =p_0>0$. Then \begin{equation}-\log \pr{S \leq \eps} \sim \frac{\log 1/p_0}{\log 1/q} \,\log 1/\eps ,\qquad \text{as $\eps\to 0$.} \label{eqn:sdd}\end{equation} \label{thm:a3} \end{thm}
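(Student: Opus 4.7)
The plan is to apply Lemma~\ref{lem:thelem} to deduce the behaviour of $F(\lambda)=-\log\E e^{-\lambda S}$ at infinity from that of $G(\lambda)=-\log\E e^{-\lambda\wast}$, and then convert the resulting Laplace-transform statement into (\ref{eqn:sdd}) by the Tauberian machinery of Lemma~\ref{lem:app}.

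First I would show that $G(\lambda)\to\log(1/p_0)$ as $\lambda\to\infty$. Writing $\E e^{-\lambda\wast}=p_0+\E[e^{-\lambda\wast}\indi{\wast>0}]$ and applying dominated convergence (the integrand is bounded by $\indi{\wast>0}$ and tends to $0$ pointwise on $\{\wast>0\}$) gives $\E e^{-\lambda\wast}\to p_0$, hence $G(\lambda)\to-\log p_0$. Note also that $G$ is nondecreasing on $[0,\infty)$ because $\wast\geq 0$.

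Next I would perform a Ces\`aro-type estimate on the sum appearing in Lemma~\ref{lem:thelem}. Set $N(\lambda)=\lfloor\log\lambda/\log(1/q)\rfloor$. Fix $\delta>0$ and pick $\lambda_0$ so that $|G(\mu)-\log(1/p_0)|<\delta$ for all $\mu\geq\lambda_0$. All but at most $\lceil\log\lambda_0/\log(1/q)\rceil+O(1)$ of the indices $n\in\{0,\ldots,N(\lambda)\}$ satisfy $q^n\lambda\geq\lambda_0$, and for these ``good'' indices the summand differs from $\log(1/p_0)$ by at most $\delta$. The remaining ``bad'' indices have $q^n\lambda\in[1,\lambda_0)$ and, by monotonicity of $G$, each contributes at most $G(\lambda_0)$, so their total contribution is a constant independent of $\lambda$. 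The additive term $F(1)<\infty$ in Lemma~\ref{lem:thelem} is likewise $O(1)$. Dividing by $N(\lambda)\sim\log\lambda/\log(1/q)$, letting $\lambda\to\infty$ and then $\delta\to 0$, I obtain
\[
F(\lambda)\sim\frac{\log(1/p_0)}{\log(1/q)}\log\lambda,\qquad\lambda\to\infty.
\]

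Finally, Lemma~\ref{lem:app} translates $F(\lambda)\sim c\log\lambda$ into $-\log\pr{S\leq\eps}\sim c\log(1/\eps)$ with $c=\log(1/p_0)/\log(1/q)$, which is exactly (\ref{eqn:sdd}). The main obstacle is the Ces\`aro step: one has to cleanly split the $N(\lambda)$ ``bulk'' indices, on which $G(q^n\lambda)$ has essentially converged to $\log(1/p_0)$, from the $O(1)$ tail indices where $q^n\lambda$ is still bounded. Once this split is made, both the bad-index contribution and the boundary term $F(1)$ are absorbed into a relative error that vanishes upon dividing by $\log\lambda$, and the Tauberian passage is routine.
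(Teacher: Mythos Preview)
Your proposal is correct and follows the same overall scheme as the paper: establish $F(\lambda)\sim\dfrac{\log(1/p_0)}{\log(1/q)}\log\lambda$ via Lemma~\ref{lem:thelem}, then apply Lemma~\ref{lem:app}. The only genuine difference lies in how the sum $\sum_{n\leq N(\lambda)}G(q^n\lambda)$ is analysed. The paper gets the upper bound immediately from the global inequality $G\leq\log(1/p_0)$; for the lower bound it writes $G=\log(1/p_0)+\tilde G$ with $\tilde G\leq 0$, $\tilde G\uparrow 0$, compares the sum $\sum\tilde G(q^n\lambda)$ to an integral $\int_q^\lambda \tilde G(y)\,\d y/y$, and invokes l'Hospital's Rule to show this integral is $o(\log\lambda)$. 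Your Ces\`aro splitting into ``good'' indices (where $G$ has essentially converged) and $O(1)$ ``bad'' indices achieves both bounds simultaneously and avoids any calculus; it is the cleaner and more elementary route. The paper's integral-plus-l'Hospital device, on the other hand, generalises more readily to situations where $G$ tends to infinity at a controlled rate rather than to a finite limit.
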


\begin{proof} If $\wast$ has positive mass at zero $G(\lambda)\sim - \log p_0$. Note that in particular, $G(\lambda) \leq - \log p_0$ for all $\lambda$. This together with (\ref{eqn:fg2}) shows that \begin{multline*}F(\lambda)-F(1) \leq \sum_{0\leq n \leq \frac{\log \lambda}{\log 1/q}} G(q^n \lambda) \leq \sum_{0\leq n \leq \frac{\log \lambda}{\log 1/q}} - \log p_0 \\ \leq  \left(\frac{\log \lambda}{\log 1/q}+1 \right)(\log 1/p_0) \lesssim \frac{\log 1/p_0}{\log 1/q} \log \lambda.\end{multline*}

The proof of the lower bound is as follows. First, let us define $\tilde{G}$ by $G(\lambda)=:-\log p_0 + \tilde{G}(\lambda)$. Note that $\tilde{G}$ is non-positive and increasing and that $\tilde{G}(\lambda)\to 0$. Therefore, \begin{multline*}\sum_{0\leq n \leq \frac{\log \lambda}{\log 1/q}} \tilde{G}(q^n \lambda) =  \sum_{0\leq n \leq \frac{\log \lambda}{\log 1/q}} \frac{\tilde{G}(q^n \lambda)}{q^{n} (1-q)}\, \int_{q^{n+1}}^{q^{n}} \d x \\ \geq   \sum_{0\leq n \leq \frac{\log \lambda}{\log 1/q}} \int_{q^{n+1}}^{q^{n}} \frac{\tilde{G}(x \lambda)}{x (1-q)}\,  \d x \geq \int_{q/\lambda}^{1} \frac{\tilde{G}(x \lambda)}{x (1-q)}\,  \d x = \int_{q}^{\lambda} \frac{\tilde{G}(y)}{y(1-q)} \,  \d y.\end{multline*} Now l'Hospital's Rule shows that the last term divided by $\log \lambda$ tends to zero, since $\tilde{G}(\lambda)\to 0$. This yields that $F(\lambda)\gtrsim (\log \lambda) \log (1/p_0) / \log (1/q) $. Thus, we finally obtain that $F(\lambda) \sim (\log \lambda) \log (1/p_0) / \log (1/q)$, and the assertion follows from Lemma~\ref{lem:app}.
\end{proof}

\begin{exa} As an illustrative example consider Bernoulli random variables given by $\pr{ \wast =0}=\pr{ \wast =1}=1/2=p_0$ and $q=1/2$. In this case, $S$ is uniformly distributed in $[0,1]$ and equality holds for all $0<\eps<1$ in (\ref{eqn:sdd}).

However, if we take for $\wast$ a distribution that has sufficiently heavy tails and a mass at zero, none of the results in the literature applies. \end{exa}

\subsection{Random variables with regularly varying lower tail}
Now, we continue with the result under the assumption that the distribution function of $\wast$ behaves essentially as a polynomial for small values. This corresponds to the case of logarithmic increase of $G$.

\begin{thm} Let $\pr{ \wast \leq \eps} \approx \eps^\beta$, as $\eps\to 0$, for some $\beta>0$. Then \begin{equation}-\log \pr{S \leq \eps} \sim \frac{\beta (\log 1/\eps)^2}{2 \log 1/q},\qquad \text{as $\eps\to 0$.} \label{eqn:addsvt}\end{equation} \label{thm:a1} \end{thm}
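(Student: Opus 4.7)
The plan is to invoke Lemma~\ref{lem:thelem} once the behaviour of $G(\lambda)=-\log\E e^{-\lambda\wast}$ at infinity has been identified. From the hypothesis $\pr{\wast\le\eps}\approx\eps^\beta$, I would derive matching two-sided bounds on $\E e^{-\lambda\wast}$: the lower bound $\E e^{-\lambda\wast}\ge e^{-1}\pr{\wast\le 1/\lambda}$ from Markov's inequality gives $\E e^{-\lambda\wast}\gtrsim\lambda^{-\beta}$, while the representation $\E e^{-\lambda\wast}=\lambda\int_0^\infty e^{-\lambda x}\pr{\wast\le x}\,\d x$, split at a fixed cut-off, yields $\E e^{-\lambda\wast}\lesssim\lambda^{-\beta}$. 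Combining the two, $G(\lambda)=\beta\log\lambda+O(1)$, so in particular $G(\lambda)\sim\beta\log\lambda$.

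Next, I would feed this into the sandwich (\ref{eqn:fg2}) of Lemma~\ref{lem:thelem}. Writing $L:=\log\lambda$, $K:=\log(1/q)$ and $N:=\lfloor L/K\rfloor$, each summand reads $G(q^n\lambda)=\beta(L-nK)+O(1)$ uniformly in $0\le n\le N$, so
\[
\sum_{n=0}^{N}G(q^n\lambda)=\beta(N+1)L-\frac{\beta K N(N+1)}{2}+O(N)\sim\frac{\beta L^2}{2K}=\frac{\beta(\log\lambda)^2}{2\log(1/q)}.
\]
Both sides of (\ref{eqn:fg2}) sandwich $F(\lambda)$ around this quantity, yielding $F(\lambda)\sim\frac{\beta(\log\lambda)^2}{2\log(1/q)}$. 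The claimed small deviation asymptotics (\ref{eqn:addsvt}) then follows from the Tauberian-type conversion provided by Lemma~\ref{lem:app}.

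The main technical worry is that the hypothesis is only a weak two-sided equivalence $\approx$, so $G$ is known to equal $\beta\log\lambda$ only up to a bounded additive error. This is not a real obstacle: the accumulated error over the $N\sim(\log\lambda)/\log(1/q)$ summands is merely $O(\log\lambda)$, strictly smaller than the main $(\log\lambda)^2$ term. A stylistically cleaner variant, closer in spirit to the proof of Theorem~\ref{thm:a3}, would pass from the Riemann sum to $\int_1^\lambda\frac{G(y)}{y\log(1/q)}\,\d y$ up to lower-order corrections, and then use $G(y)\sim\beta\log y$ together with $\int_1^\lambda\frac{\log y}{y}\,\d y=\frac{(\log\lambda)^2}{2}$ to read off the constant $\frac{\beta}{2\log(1/q)}$ directly.
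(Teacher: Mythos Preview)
Your argument is correct and follows essentially the same route as the paper: translate the lower-tail hypothesis into $G(\lambda)=\beta\log\lambda+O(1)$, plug this into the sandwich (\ref{eqn:fg2}) from Lemma~\ref{lem:thelem}, evaluate the resulting arithmetic sum, and finish with Lemma~\ref{lem:app}. The only differences are that you justify the Laplace bounds $\E e^{-\lambda\wast}\approx\lambda^{-\beta}$ explicitly (the paper merely states them) and spell out the summation that the paper calls ``a short calculation''.
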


\begin{proof} The assumption for the lower tail of $\wast$ implies that $\E e^{-\lambda \wast} \approx \lambda^{-\beta}$, as $\lambda\to \infty$. 
The monotonicity of $G$ implies that there are constants $C_1, C_2\in \R$ such that $$\beta \log \lambda +C_1 \leq G(\lambda) \leq \beta \log \lambda +C_2,\qquad\text{for all $\lambda\geq 1$.}$$ From ({\ref{eqn:fg2}}) we deduce that $$F(\lambda) \leq F(1)+ \sum_{0\leq n \leq \frac{\log \lambda}{\log 1/q}} (\beta \log (q^n \lambda) +C_2).$$
After a short calculation one can see that the strong asymptotic order of the right hand side, when $\lambda\to\infty$, is $\beta (\log \lambda)^2/(2 \log 1/q)$.

Analogous arguments show the lower bound. I.e.\ we obtain $$F(\lambda) \sim \frac{\beta (\log \lambda)^2}{2 \log 1/q},\qquad \text{as $\lambda\to\infty$.}$$ Finally we use Lemma~\ref{lem:app}. 
\end{proof}

This theorem is proved by \citeasnoun{br2}; however, the present method uses a significantly simpler proof. Also, a stronger version of this theorem was proved by \citeasnoun{br2} for rational $\beta$ and with a stronger assumption for the lower tail of $\wast$.

\begin{rem} With essentially the same proof, one can show even more. Namely, assume that $\pr{ \wast \leq \eps} \approx \eps^\beta \ell(1/\eps)$, as $\eps\to 0$, where $\beta>0$ and $\ell$ is a slowly varying function at infinity that is bounded away from $0$ and $\infty$ on every compact subset of $[1,\infty[$. Then (\ref{eqn:addsvt}) holds. In particular, $\ell$ has no influence on the order, cf.\ \citeasnoun{br2}. \end{rem}

\begin{exa} As main examples one can consider $\wast=|\wast'|^p$ for Gaussian random variables $\wast'$ or symmetric stable random variables $\wast'$ (in both of which cases $\beta=1/p$). One can as well consider $\wast$ with a Gamma distribution, Weibull distribution, etc.

Recall that \citeasnoun{dll} consider the most important case $\wast'$ standard normal and $p=2$ and give much more precise estimates. However, for random variables with a heavy tail (e.g.\ the mentioned stables) the above result seems to be new. \end{exa}

\begin{rem} Using essentially the same technique as in the proof of Theorem~\ref{thm:a1} one also tackles the case when $-\log \pr{X\leq \eps}$ is some other slowly varying function. For example, in the case that $X$ is log normal one obtains in this way that $-\log \pr{S \leq \eps} \sim (\log 1/\eps)^3 / (6 \log 1/q)$, as $\eps\to 0$. \end{rem}

\subsection{Random variables with exponentially small lower tail}
Finally, we consider the case that $\wast$ has exponentially little mass near the origin. This corresponds to the case of polynomial increase of $G$.

\begin{thm} Let $-\log \pr{ \wast \leq \eps} \sim  K \eps^{-\gamma}$, as $\eps\to 0$, for some $\gamma>0$ and $K>0$. Then \begin{equation} -\log \pr{S \leq \eps} \sim  \frac{{K}}{(1-q^{{\gamma/(1+\gamma)}})^{1+\gamma}}\, \eps^{-\gamma},\qquad \text{as $\eps\to 0$.}\label{eqn:mexca1} \end{equation} \label{thm:a2} \end{thm}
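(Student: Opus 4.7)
The plan is to convert the hypothesis on the lower tail of $\wast$ into the behaviour of $G$ at infinity, apply Lemma~\ref{lem:thelem} to get the behaviour of $F$, and then translate back to the small deviation probability of $S$.

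First, I would appeal to an exponential Tauberian theorem of de Bruijn type (see \citeasnoun{bgt}, and presumably a version stated as Lemma~\ref{lem:app}) to convert the assumption $-\log \pr{\wast\leq \eps} \sim K\eps^{-\gamma}$ into the asymptotics
\begin{equation*}
G(\lambda) \sim C\,\lambda^{\alpha}, \qquad \lambda\to\infty,
\end{equation*}
where $\alpha:=\gamma/(1+\gamma)\in(0,1)$ and $C:=(1+\gamma)(K/\gamma^{\gamma})^{1/(1+\gamma)}$. This is the only nontrivial analytic input; everything else is computation.

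Next, I plug this into Lemma~\ref{lem:thelem}. The upper bound is immediate once the sum is controlled: for any $\eta>0$, pick $\lambda_0$ such that $G(\mu)\leq(1+\eta)C\mu^{\alpha}$ for $\mu\geq\lambda_0$. Split the sum in (\ref{eqn:fg2}) into indices with $q^n\lambda\geq \lambda_0$ and the remaining (finitely many, say $O(1)$ in number) indices with $q^n\lambda<\lambda_0$, on which $G(q^n\lambda)\leq G(\lambda_0)$ is bounded. The first part is geometric and dominated by
\begin{equation*}
(1+\eta)C\lambda^{\alpha}\sum_{n=0}^{\infty}q^{n\alpha}=\frac{(1+\eta)C\lambda^{\alpha}}{1-q^{\alpha}},
\end{equation*}
and the second part is $O(1)$. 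The matching lower bound follows identically using $G(\mu)\geq(1-\eta)C\mu^{\alpha}$ and noting that the upper limit of summation $N(\lambda)$ tends to infinity, so the geometric tail is negligible. Letting $\eta\to 0$ yields
\begin{equation*}
F(\lambda)\sim \frac{C}{1-q^{\alpha}}\,\lambda^{\alpha},\qquad \lambda\to\infty.
\end{equation*}

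Finally, I apply the Tauberian theorem once more in the reverse direction to translate the asymptotics of $F$ into the small deviation asymptotics of $S$. If $F(\lambda)\sim D\lambda^{\alpha}$ with $\alpha=\gamma/(1+\gamma)$, the de~Bruijn correspondence gives $-\log\pr{S\leq\eps}\sim K_{\mathrm{new}}\eps^{-\gamma}$ with
\begin{equation*}
K_{\mathrm{new}}=\frac{D^{1+\gamma}\gamma^{\gamma}}{(1+\gamma)^{1+\gamma}}
=\frac{C^{1+\gamma}}{(1-q^{\alpha})^{1+\gamma}}\cdot\frac{\gamma^{\gamma}}{(1+\gamma)^{1+\gamma}}
=\frac{K}{(1-q^{\gamma/(1+\gamma)})^{1+\gamma}},
\end{equation*}
which is exactly (\ref{eqn:mexca1}). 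The main obstacles are of bookkeeping nature: keeping track of the constants through the two Tauberian translations, and verifying rigorously that the tail of the sum in Lemma~\ref{lem:thelem} (where the asymptotic for $G$ does not yet apply) contributes only lower-order terms. Both are handled by the $\lambda_0$-truncation argument described above, together with the fact that $G$ is monotone and bounded on compacts.
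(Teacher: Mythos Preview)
Your proof is correct and follows exactly the paper's approach: convert the tail of $\wast$ to $G(\lambda)\sim K'\lambda^{\gamma'}$ via the de~Bruijn Tauberian theorem (this is Theorem~4.12.9 in \citeasnoun{bgt}, not Lemma~\ref{lem:app}, which handles the slowly varying case), feed this into Lemma~\ref{lem:thelem} to get $F(\lambda)\sim K'\lambda^{\gamma'}/(1-q^{\gamma'})$, and convert back. Your $\lambda_0$-truncation argument and constant bookkeeping spell out what the paper summarizes as ``in a similar fashion as in the proof of Theorem~\ref{thm:a1}.''
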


\begin{proof} From the assumption we deduce, by \citeasnoun{bgt}, Theorem~4.12.9, that $G(\lambda) \sim {K'} \lambda^{\gamma'}$, as $\lambda\to \infty$, where $K=({K'}{\gamma'}^{\gamma'} (1-{\gamma'})^{1-{\gamma'}})^{1/(1-{\gamma'})}$ and $\gamma={\gamma'}/(1-{\gamma'})$.

From ({\ref{eqn:fg2}}) we deduce in a similar fashion as in the proof of Theorem~\ref{thm:a1} that $$F(\lambda) \sim \frac{{K'} \lambda^{\gamma'}}{1-q^{\gamma'}},\qquad \text{as $\lambda\to\infty$.}$$ Again by \citeasnoun{bgt}, Theorem~4.12.9, we conclude the proof. \end{proof}

A similar result holds if we add a slowly varying term in the asymptotics of the lower tail of $\wast$. Furthermore, one can prove an analogous result under the assumption that $-\log \pr{ \wast \leq \eps} \approx \eps^{-\gamma}$.

Under only slightly stronger assumptions \citeasnoun{br1} obtained the same result.

\begin{exa} An example that fits perfectly into the situation of Theorem~\ref{thm:a2} is the inverse Weibull distribution, that has exponentially little mass near the origin and a heavy tail.

As another illustrative example consider a stable totally skewed random variable $\wast'$, i.e.\ a positive random variable with Laplace transform $\E e^{-\lambda \wast'} = \exp( - K \lambda^\alpha)$, with $K>0$ and $0<\alpha<1$, cf.\ \citeasnoun{ST}. These random variables are $\alpha$-stable and have exponentially little mass near the origin. Theorem~\ref{thm:a2} can be applied to $\wast={\wast'}^p$, where $\gamma=\alpha/(p(1-\alpha))$. Note that in the case $p=1$ the sum $S$ is also a stable random variable, which makes it easy to verify the constant occuring in (\ref{eqn:mexca1}). \label{exa:stable} \end{exa}

\section{Comparison with the supremum case} \label{sec:sup}
In this section, we give an extension and comparison to the small deviations of the supremum. Namely, we consider the small deviation problem for $$M:=\sup_n \sigma_n \wast_n,$$ when $\sigma_n=q^n$. Surprisingly, we obtain the same small deviation rate for $M$ and $S$ in the setup considered in Section~\ref{sec:app}. This contrasts the results for the cases when $(\sigma_n)$ is polynomially decreasing, where the small deviation rates for $M$ and $S$ are always distinct, cf.\ \citeasnoun{aurzada1}.

Let us start with the case when $\wast$ has a positive mass at the origin considered in Theorem~\ref{thm:a3}. The resulting small deviation rate and constant are the same for $S$ and $M$.

\begin{thm} Let $\pr{ \wast =0} =p_0>0$. Then $$-\log \pr{M \leq \eps} \sim -\log \pr{S \leq \eps} \sim \frac{\log 1/p_0}{\log 1/q} \,\log 1/\eps ,\qquad \text{as $\eps\to 0$.}$$ \label{thm:a3sup} \end{thm}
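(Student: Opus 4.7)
My plan is to work directly with the product representation of $\pr{M\le \eps}$ coming from independence, rather than to route through Laplace transforms as was done for $S$. Writing $F_X(c):=\pr{\wast\le c}$, we have
\[
-\log \pr{M\le \eps} \;=\; -\sum_{n=0}^\infty \log F_X(\eps q^{-n}),
\]
and the sum is of non-negative terms since $F_X\le 1$. The key observation is that $F_X(c)\to p_0$ as $c\downarrow 0$, so as long as $\eps q^{-n}$ is small the summand is close to $\log 1/p_0$; and one counts those small arguments to get the factor $\log(1/\eps)/\log(1/q)$.

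More precisely, fix $\delta>0$ and choose $c_0>0$ so that $p_0\le F_X(c)\le p_0+\delta$ for all $0<c\le c_0$; define $n_1(\eps)$ as the largest integer with $\eps q^{-n_1}\le c_0$, so that $n_1(\eps)=\log(c_0/\eps)/\log(1/q)+O(1)$. For the ``head'' $0\le n\le n_1(\eps)$ the argument $\eps q^{-n}$ lies in $(0,c_0]$, so each summand is sandwiched between $\log 1/(p_0+\delta)$ and $\log 1/p_0$. Summing gives
\[
(n_1(\eps)+1)\log\frac{1}{p_0+\delta} \;\le\; \sum_{n=0}^{n_1(\eps)}\!\!-\log F_X(\eps q^{-n}) \;\le\; (n_1(\eps)+1)\log\frac{1}{p_0}.
\]
Dividing by $\log 1/\eps$ and letting $\eps\to 0$ then $\delta\to 0$ pins both sides down to $\log(1/p_0)/\log(1/q)$; this will recover the claimed rate, provided the ``tail'' $\sum_{n>n_1(\eps)}-\log F_X(\eps q^{-n})$ is negligible.

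The technical point, which I expect to be the only real obstacle, is controlling that tail uniformly in $\eps$. Shifting index by $m=n-n_1(\eps)-1$, the tail becomes $\sum_{m\ge 0}-\log F_X(a_\eps q^{-m})$ where $a_\eps:=\eps q^{-n_1(\eps)-1}>c_0$ is bounded. Using $-\log F_X(u)\le C\,\pr{\wast>u}$ for $u$ large enough (since $F_X(u)\to 1$), the tail is dominated by $\sum_{m\ge 0}\pr{\wast>c_0 q^{-m}}$, which by a standard comparison with the integral $\int \pr{\wast>u}\,du/u$ equals a constant multiple of $\E\log\max(\wast,1)$, finite by assumption (\ref{eqn:starns}). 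Hence the tail is $O(1)$ uniformly in $\eps$, contributes only $o(\log 1/\eps)$, and combining with the head estimate above yields
\[
-\log\pr{M\le \eps}\;\sim\;\frac{\log 1/p_0}{\log 1/q}\,\log 1/\eps,
\]
which together with Theorem~\ref{thm:a3} gives the full statement.
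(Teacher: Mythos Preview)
Your argument is correct, but it does more work than the paper's proof. The paper observes that $M\le S$ immediately gives
\[
-\log\pr{M\le\eps}\;\le\;-\log\pr{S\le\eps}\;\sim\;\frac{\log 1/p_0}{\log 1/q}\,\log 1/\eps
\]
by Theorem~\ref{thm:a3}, so only the \emph{lower} bound for $-\log\pr{M\le\eps}$ needs to be proved directly. For that, the paper truncates the sum at $N\sim\log(1/\eps)/\log(1/q)$ and uses only the head estimate $-\log F_X(\eps q^{-n})\ge \log(1/(\tau p_0))$; since all summands are non-negative, discarding the tail already gives a valid lower bound, and no tail control is needed at all.

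You instead establish both bounds for $M$ from scratch, which forces you to show the tail $\sum_{n>n_1(\eps)}-\log F_X(\eps q^{-n})$ is $O(1)$ via the log-moment condition~(\ref{eqn:starns}). This is a genuine extra step (essentially the same estimate the paper later carries out in the proof of Theorem~\ref{thm:a2sup}), and it is done correctly. The payoff is that your argument for $M$ is self-contained and does not rely on the Laplace-transform machinery behind Theorem~\ref{thm:a3}; the paper's shortcut, by contrast, is shorter precisely because it leans on that earlier result through the trivial comparison $M\le S$.
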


\begin{proof} Since $M\leq S$, we only have to show an upper bound for $\pr{M \leq \eps}$.

Let $\tau>1$. Since the distribution function $\pr{\wast\leq \delta}$ is c\`{a}dl\`{a}g, there is a $\delta>0$ such that $\pr{\wast\leq \delta} \leq \tau p_0$.

Let $\eps<\delta$ and choose the maximal $N\in\N$ such that $\eps q^{-N}\leq\delta$, i.e.\ asymptotically, when $\eps\to 0$, $N\sim \log (\delta/\eps)/\log (1/q) \sim \log (1/\eps)/\log (1/q)$. Then, by the independence of the $\wast_n$'s, \begin{multline*} \log \pr{M \leq \eps} \leq \sum_{n=0}^N \log\pr{\wast \leq \eps q^{-n}} \leq \sum_{n=0}^N \log\pr{\wast \leq \delta} \\ \leq \sum_{n=0}^N \log \tau p_0 = (N+1)\log \tau p_0. \end{multline*} If we let $\eps$ tend to zero, we obtain 
$$\limsup_{\eps\to 0} \frac{\log \pr{M \leq \eps}}{\log (1/\eps)/\log (1/q)} \leq \log \tau p_0,$$ for all $\tau>1$. We let $\tau\to 1$ to finish the proof. \end{proof}

We continue with the case considered in Theorem~\ref{thm:a1}. Here again we find that $M$ and $S$ have the same small deviation rate and constant.

\begin{thm} Let $\pr{ \wast \leq \eps} \approx \eps^\beta$, as $\eps\to 0$, for some $\beta>0$. Then $$-\log \pr{M \leq \eps}\sim -\log \pr{S \leq \eps} \sim \frac{\beta (\log 1/\eps)^2}{2 \log 1/q},\qquad \text{as $\eps\to 0$.}$$ \label{thm:a1sup} \end{thm}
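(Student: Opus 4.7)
The plan is to prove the two asymptotic relations separately. The first, concerning $S$, is already contained in Theorem~\ref{thm:a1}, so it suffices to treat $M$. Since $M\leq S$ pointwise, we have $\P(S\leq\eps)\leq\P(M\leq\eps)$, hence
\[
-\log\pr{M\leq\eps}\leq -\log\pr{S\leq\eps}\sim \frac{\beta(\log 1/\eps)^2}{2\log 1/q},
\]
which takes care of the upper bound for $-\log\pr{M\leq\eps}$ without any further work.

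The task therefore reduces to the matching lower bound. Here I would use the independence of the $\wast_n$, which gives the product formula
\[
-\log\pr{M\leq\eps}=-\sum_{n=0}^\infty \log\pr{\wast\leq \eps q^{-n}}.
\]
By the hypothesis $\pr{\wast\leq\eps}\approx \eps^\beta$, there exist $\delta>0$ and $C>0$ such that $\pr{\wast\leq s}\leq C s^\beta$ for all $s\leq\delta$. Truncate the sum at $N=N(\eps)$, the largest integer with $\eps q^{-N}\leq\delta$, so that $N\sim \log(1/\eps)/\log(1/q)$ as $\eps\to 0$.

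For $0\leq n\leq N$, each term satisfies
\[
-\log\pr{\wast\leq \eps q^{-n}}\geq -\log C+\beta\log(1/\eps)-\beta n\log(1/q).
\]
Summing these $N+1$ inequalities gives a leading contribution $(N+1)\beta\log(1/\eps)\sim \beta(\log 1/\eps)^2/\log(1/q)$ from the $\eps$-terms, minus $\beta\log(1/q)\cdot N(N+1)/2\sim \beta(\log 1/\eps)^2/(2\log 1/q)$ from the linear-in-$n$ terms, so the two combine to the required $\beta(\log 1/\eps)^2/(2\log 1/q)$; the term $-(N+1)\log C$ is of lower order $O(\log 1/\eps)$ and is therefore negligible.

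The argument is essentially a bookkeeping exercise once one has the pointwise bound $\pr{\wast\leq s}\leq Cs^\beta$, so I do not anticipate a genuine obstacle. The only subtlety is making sure that the approximation $\pr{\wast\leq s}\approx s^\beta$ is used only for $s\leq\delta$, where the upper constant $C$ exists; the remaining terms $n>N$ of the infinite sum contribute non-negatively to $-\log\pr{M\leq\eps}$, so dropping them only strengthens the lower bound.
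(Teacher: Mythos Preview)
Your proposal is correct and follows essentially the same route as the paper: reduce via $M\leq S$ to a lower bound for $-\log\pr{M\leq\eps}$, truncate the product formula at the largest $N$ with $\eps q^{-N}$ below a fixed threshold, apply the pointwise bound $\pr{\wast\leq s}\leq C s^\beta$ on each retained factor, and sum the resulting arithmetic progression. The only cosmetic difference is that the paper truncates at threshold $1$ (absorbing the range $[\delta,1]$ into the constant) whereas you truncate at $\delta$; asymptotically this makes no difference.
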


\begin{proof} In view of the trivial fact that $M\leq S$, we only have to show an upper bound for $\pr{M \leq \eps}$. Note that $$\log \pr{M \leq \eps} = \sum_{n=0}^\infty \log\pr{q^n \wast \leq \eps} \leq \sum_{n=0}^N \log\pr{\wast \leq \eps q^{-n}},$$ for any $N\geq 0$. We choose the largest $N\in \N$ such that $\eps q^{-N} \leq 1$, i.e., as $\eps\to 0$, $N\sim \log (1/\eps)/\log (1/q)$. Then, by assumption, the last term is less than $$\sum_{n=0}^N \log (c \eps q^{-n})^\beta =  \beta (-\log 1/\eps + \log c)(N+1)+ \sum_{n=0}^N \beta n \log 1/q,$$ for some constant $c>0$ depending only on the lower tail of $\wast$. A short calculation shows that the asymptotic order of this term, as $\eps\to 0$, is indeed $-\beta (\log 1/\eps)^2 / (2 \log 1/q)$, as asserted. \end{proof}

We finish with the case considered in Theorem~\ref{thm:a2}, where $\wast$ has exponentially little mass near the origin. Here, the small deviation rates for $S$ and $M$ are the same, however, the small deviation constants differ (compare the constants in~(\ref{eqn:mexca1}) and~(\ref{eqn:mexca2})).

\begin{thm} \label{thm:a2sup} Let $-\log \pr{ \wast \leq \eps} \sim  K \eps^{-\gamma}$, as $\eps\to 0$, for some $\gamma>0$ and $K>0$. Then  \begin{equation}-\log \pr{M \leq \eps} \sim  \frac{{K}}{1-q^{\gamma}}\, \eps^{-\gamma},\qquad \text{as $\eps\to 0$.} \label{eqn:mexca2} \end{equation} \end{thm}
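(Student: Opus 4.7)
The plan is to exploit the independence of the $\wast_n$'s to rewrite $-\log\pr{M\leq\eps}$ as an explicit series and then identify its main contribution term by term. Setting $g(\delta):=-\log\pr{\wast\leq\delta}$, the identity $\{M\leq\eps\}=\bigcap_n\{\wast_n\leq\eps q^{-n}\}$ together with independence gives
\begin{equation*}
-\log\pr{M\leq\eps} = \sum_{n=0}^\infty g(\eps q^{-n}).
\end{equation*}
Since the hypothesis yields $g(\delta)\sim K\delta^{-\gamma}$ as $\delta\to 0$, a purely formal substitution produces $K\eps^{-\gamma}\sum_{n\geq 0}q^{n\gamma}=K\eps^{-\gamma}/(1-q^\gamma)$, which is exactly the claimed constant. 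All the real work lies in justifying this limit interchange uniformly in $\eps$.

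For the lower bound I would fix $\tau\in(0,1)$, choose $\delta_0>0$ so small that $g(\delta)\geq\tau K\delta^{-\gamma}$ on $(0,\delta_0]$, and retain only the terms $n\leq N_\eps$, where $N_\eps$ is the largest index with $\eps q^{-N_\eps}\leq\delta_0$. As $\eps\to 0$ we have $N_\eps\to\infty$, so the resulting finite geometric sum converges to $\tau K\eps^{-\gamma}/(1-q^\gamma)$, and letting $\tau\uparrow 1$ after taking $\liminf$ closes this direction. The upper bound is set up in the parallel way: with $\tau>1$ and $g(\delta)\leq\tau K\delta^{-\gamma}$ on $(0,\delta_0]$, the head $\sum_{n=0}^{N_\eps}g(\eps q^{-n})$ is bounded above by $\tau K\eps^{-\gamma}/(1-q^\gamma)$, and one lets $\tau\downarrow 1$ at the end.

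The key obstacle is the tail $n>N_\eps$ in the upper bound, which is not present in the proofs of Theorems~\ref{thm:a3sup} and~\ref{thm:a1sup} (where the comparison $M\leq S$ took care of the upper bound for free). Here the upper bound from $S$ has the wrong constant, so a direct estimate is needed. I would use monotonicity of $g$: since $\eps q^{-(N_\eps+1+m)}>\delta_0 q^{-m}$ for each $m\geq 0$, one gets
\begin{equation*}
\sum_{n>N_\eps}g(\eps q^{-n}) \leq \sum_{m=0}^\infty g(\delta_0 q^{-m}) = -\log\pr{M\leq\delta_0},
\end{equation*}
which is a finite constant, hence $O(1)$ and negligible relative to $\eps^{-\gamma}$. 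The finiteness of this auxiliary quantity is precisely where the standing assumption $\E\log\max(\wast,1)<\infty$ enters: it is equivalent to $\sum_m\pr{\wast>\delta_0 q^{-m}}<\infty$ and hence to $\pr{M\leq\delta_0}>0$. Once this tail control is in hand, the proof reduces to the geometric-series arithmetic sketched above, and no Tauberian argument is required since one works directly in the variable $\eps$.
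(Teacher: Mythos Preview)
Your argument is correct and follows the same overall skeleton as the paper's proof: write $-\log\pr{M\leq\eps}=\sum_n g(\eps q^{-n})$, isolate the head where the asymptotic hypothesis applies, and show the remaining tail is $O(1)$. The difference lies in how the tail is handled. The paper splits the sum into \emph{three} ranges $[0,N_1]$, $(N_1,N_2]$, $(N_2,\infty)$ (with $N_2$ the largest index for which $\eps q^{-N_2}\leq 1$), bounds the middle piece by a crude estimate of order $O(1)$, and then evaluates the far tail explicitly as $\E\log\max(\wast,1)$ via an integral comparison. Your shift-and-monotonicity trick $\sum_{n>N_\eps}g(\eps q^{-n})\leq\sum_{m\geq 0}g(\delta_0 q^{-m})=-\log\pr{M\leq\delta_0}$ collapses these two steps into one and is cleaner; it sidesteps the explicit computation at the cost of not identifying the limiting value of the tail. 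Either way the tail is an $\eps$-free constant and the geometric-series arithmetic in the head gives the claimed constant $K/(1-q^\gamma)$.
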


\begin{proof} Let $\tau\in]0,1[$. By assumption, there is a $0<\delta=\delta(\tau)<1$ such that for all $\eps<\delta$, \begin{equation} (1-\tau) K\eps^{-\gamma}\leq -\log \pr{ \wast \leq \eps} \leq (1+\tau)  K\eps^{-\gamma}. \label{eqn:eosp}\end{equation}

Let $\eps<\delta$. We choose the maximal $N_1\in\N$ such that $\eps q^{-N_1}\leq \delta$ and the maximal $N_2\in\N$ such that  $\eps q^{-N_2}\leq 1$. Note that \begin{multline}-\log \pr{M \leq \eps} =\sum_{n=0}^{N_1}- \log\pr{q^n \wast \leq \eps} \\ + \sum_{n=N_1+1}^{N_2}- \log\pr{q^n \wast \leq \eps} + \sum_{n=N_2+1}^\infty -\log\pr{q^n \wast \leq \eps}. \label{eqn:lasth}\end{multline}
Let us look at the first term in (\ref{eqn:lasth}). Using (\ref{eqn:eosp}) we estimate it from above by$$\sum_{n=0}^{N_1}- \log\pr{ \wast \leq q^{-n}\eps} \leq (1+\tau) K \eps^{-\gamma} \sum_{0\leq n \leq \frac{\log \delta/\eps}{\log 1/q}} (q^\gamma)^n \sim \frac{(1+\tau) K \eps^{-\gamma} }{1-q^\gamma},$$ as $\eps\to 0$. Analogously, the respective lower bound (with $(1-\tau)$) follows.

Let us look at the second term in (\ref{eqn:lasth}). We use the assumption to estimate $$\sum_{n=N_1+1}^{N_2}- \log\pr{\wast \leq q^{-n}\eps} \leq \sum_{n=N_1+1}^{N_2}  C \eps^{-\gamma} q^{n \gamma} = C \eps^{-\gamma}\, \frac{q^{(N_1+2)\gamma}-q^{(N_2+1)\gamma}}{1-q^\gamma},$$ which holds for some constant $C>0$ only depending on the lower tail of $X$. By the choice of $N_1$ and $N_2$, the last term can be estimated from above by $$C \eps^{-\gamma}\, \frac{(\eps/\delta)^\gamma q^\gamma-\eps^\gamma q^\gamma}{1-q^\gamma} = C q^\gamma \eps^{-\gamma}\, \eps^\gamma \,\frac{ \delta^{-\gamma}-1}{1-q^\gamma}.$$ 

Finally we come to the third term in (\ref{eqn:lasth}). Note that $$\sum_{n=N_2+1}^\infty -\log\pr{q^n \wast \leq \eps} \approx \sum_{q^n < \eps} \pr{q^n \wast > \eps} \int_{q^{n+1}}^{q^n} \frac{q^{-n}}{1-q}\, \d x,$$ where we used that $\log x \approx x-1$ for $x\to 1$ and $\pr{X\leq 1}>0$. When $\eps\to 0$, the last term is of order $$\approx  \int_0^{\eps} \pr{x \wast > \eps}\, \frac{\d x}{x} = \int_0^{\eps} \E \indi{X>\eps/x}\, \frac{\d x}{x}  = \E  \int_{0}^{\eps} \indi{X>\eps/x} \, \frac{\d x}{x}.$$ This equals $\E \log \max(\wast,1)$, which is the finite constant from (\ref{eqn:starns}).

Therefore, putting the three estimates together, $$-\log \pr{M \leq \eps} \lesssim \frac{(1+\tau) K}{1-q^\gamma}\, \eps^{-\gamma} ,$$ as $\eps\to 0$, for all $\tau>0$. Letting $\tau\to 0$ gives the upper bound. The lower bound is established in exactly the same way. \end{proof}

\section{Tauberian theorem and final comments} \label{sec:final} Let us finally provide the mentioned Tauberian-type arguments. The proof is along the same lines of the proof of Theorem~4.12.9 in \citeasnoun{bgt}. We refer e.g.\ to \citeasnoun{br1}, Lemma~6.1, and the reference mentioned there.

\begin{lem} \label{lem:app} Let $S$ be any non-negative random variable and let $\gamma\geq 1$ and $K>0$. Then \begin{equation} -\log \E e^{-\lambda S} \sim K (\log\lambda)^\gamma,\quad \text{as $\lambda\to \infty$,}\label{eqn:app1} \end{equation} holds if and only if \begin{equation} -\log \pr{ S\leq \eps} \sim K (\log 1/\eps)^\gamma,\quad\text{as $\eps\to 0$.}\label{eqn:app22} \end{equation}\end{lem}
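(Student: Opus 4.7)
The plan is to follow the de~Bruijn-style Tauberian template (essentially the one used in the proof of Theorem~4.12.9 of \citeasnoun{bgt}), which rests on only two elementary inequalities: the exponential Chebyshev bound, $\pr{S\le \eps} \le e^{\lambda \eps}\, \E e^{-\lambda S}$, and the tail split, $\E e^{-\lambda S} \le \pr{S\le \eps} + e^{-\lambda \eps}$. For each of the two implications, one of these will give the easy direction (choosing $\lambda = 1/\eps$ or $\eps = 1/\lambda$, so that the cross-term is $O(1)$ and absorbed into the $o$-error), while the other will give the matching bound once the free parameter is tuned just large enough that the cross-term $e^{-\lambda \eps}$ becomes exponentially negligible on the logarithmic scale.

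Take first (\ref{eqn:app22}) $\Rightarrow$ (\ref{eqn:app1}). The lower bound $\E e^{-\lambda S} \ge e^{-\lambda \eps}\pr{S\le \eps}$ with $\eps = 1/\lambda$ immediately yields $-\log \E e^{-\lambda S} \le 1 + K(\log \lambda)^\gamma (1+o(1))$. For the reverse inequality I would apply the split with $\eps = (\log \lambda)^{\gamma+1}/\lambda$: then $\log 1/\eps = \log \lambda - (\gamma+1)\log\log\lambda \sim \log \lambda$, so by hypothesis $\pr{S\le\eps} = \exp(-K(\log\lambda)^\gamma(1+o(1)))$, while $e^{-\lambda \eps} = \exp(-(\log\lambda)^{\gamma+1})$ is much smaller; combining gives $-\log \E e^{-\lambda S} \ge K(\log\lambda)^\gamma (1+o(1))$. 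The reverse implication (\ref{eqn:app1}) $\Rightarrow$ (\ref{eqn:app22}) is perfectly symmetric: exponential Chebyshev with $\lambda = 1/\eps$ gives the easy bound $-\log \pr{S \le \eps} \ge K(\log 1/\eps)^\gamma(1+o(1))$, and the rearranged split $\pr{S\le \eps} \ge \E e^{-\lambda S} - e^{-\lambda \eps}$, taken with $\lambda = (\log 1/\eps)^{\gamma+1}/\eps$, produces the matching bound after the analogous verification that $\log \lambda \sim \log 1/\eps$ and that $e^{-\lambda \eps}$ is dominated by $\E e^{-\lambda S}$.

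The single non-routine step is the choice of this parameter scale: one must take $\eps$ (or $\lambda$) small enough that $\lambda \eps \gg (\log \lambda)^\gamma$, so that the cross-term disappears, yet large enough that the logarithm of the parameter is, on the required scale, indistinguishable from the logarithm of its reciprocal, so that the leading constant $K$ survives unchanged. The choice $\eps = (\log \lambda)^{\gamma+1}/\lambda$ (and its dual) achieves both simultaneously; everything else is bookkeeping, and the assumption $\gamma \ge 1$ enters only in that it guarantees the gap between $(\log \lambda)^{\gamma+1}$ and $(\log \lambda)^\gamma$ is itself unbounded. The written-out argument is Lemma~6.1 of \citeasnoun{br1}.
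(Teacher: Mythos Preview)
Your proposal is correct and matches the paper's approach exactly: the paper gives no written-out proof but simply points to Theorem~4.12.9 of \citeasnoun{bgt} and to Lemma~6.1 of \citeasnoun{br1}, which are precisely the de~Bruijn template and the reference you invoke. One minor inaccuracy in your closing commentary (not in the argument itself): the ratio $(\log\lambda)^{\gamma+1}/(\log\lambda)^{\gamma}=\log\lambda$ diverges for every $\gamma>0$, so the hypothesis $\gamma\ge 1$ is not actually needed at the place you indicate.
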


\begin{rem} \label{rem:simplif} Finally, let us comment on whether $\sigma_n\sim q^n$ can be replaced by $\sigma_n=q^n$. Several authors have investigated this question, cf.\ \citeasnoun{licomp}, \citeasnoun{ght}, \citeasnoun{gl}, \citeasnoun{phd}, or \citeasnoun{aurzada1}. In fact, using the technique from the proof of Lemma~4.1 in \citeasnoun{aurzada1} it is easy to see that in the situation of Theorems~\ref{thm:a3}, \ref{thm:a1}, \ref{thm:a3sup}, and~\ref{thm:a1sup} the assumption $\sigma_n=q^n$ can be replaced w.l.o.g.\ by $\sigma_n=q^n$.

Contrary to this, for Theorems~\ref{thm:a2} and~\ref{thm:a2sup} it is not sufficient to assume $\sigma_n\sim q^n$ as one can see easily from the stable sum in Example~\ref{exa:stable} or the proof of Theorem~\ref{thm:a2sup}. Nevertheless, here the proof of Theorem~\ref{thm:a2sup} also serves in the general case. In the case of the sum however it is not clear how to determine the correct constant.
\end{rem}

\medskip
\noindent {\bf Acknowledgements:} I would like to thank A.~A.~Borovkov and P.~S.~Ru\-zan\-kin for sending me their preprints (\citeasnoun{br1} and \citeasnoun{br2}) and for valuable comments. This research was supported by the DFG Research Center \textsc{Matheon} ``Mathematics for key technologies'' in Berlin.

\hypertarget{vd}{~}\pdfbookmark[1]{References}{vd}
\bibliographystyle{alpha}

\end{document}